\newtheorem{theorem}{Theorem}[section]
\newtheorem{lemma}[theorem]{Lemma}
\newtheorem{corollary}[theorem]{Corollary}
\newtheorem{proposition}[theorem]{Proposition}
\theoremstyle{definition}
\newtheorem{definition}[theorem]{Definition}
\theoremstyle{remark}
\newtheorem{remark}[theorem]{Remark}
\newtheorem{question}[theorem]{Question}
\numberwithin{equation}{section}
\DeclareMathOperator{\diam}{diam}
\DeclareMathOperator{\rk}{rk}
\DeclareMathOperator{\dist}{dist}
\DeclareMathOperator{\interior}{int}
\DeclareMathOperator{\UW}{UW}
\newcommand{\eps}{\varepsilon}
\begin{document}

\title{Local-to-global Urysohn width estimates}

\author{Alexey~Balitskiy{$^\clubsuit$}}

\email{{$^\clubsuit$}balitski@mit.edu}

\author{Aleksandr~Berdnikov{$^\spadesuit$}}

\email{{$^\spadesuit$}aberdnik@mit.edu}

\address{{$^\clubsuit$}{$^\spadesuit$} Dept. of Mathematics, Massachusetts Institute of Technology, 182 Memorial Dr., Cambridge, MA 02142, USA}
\address{{$^\clubsuit$} Institute for Information Transmission Problems RAS, Bolshoy Karetny per. 19, Moscow, Russia 127994}






\begin{abstract}
The notion of the Urysohn $d$-width measures to what extent a metric space can be approximated by a $d$-dimensional simplicial complex. We investigate how local Urysohn width bounds on a Riemannian manifold affect its global width. We bound the $1$-width of a Riemannian manifold in terms of its first homology and the supremal width of its unit balls. Answering a question of Larry~Guth, we give examples of $n$-manifolds of considerable $(n-1)$-width in which all unit balls have arbitrarily small $1$-width. We also give examples of topologically simple manifolds that are locally nearly low-dimensional.
\end{abstract}

\maketitle

\section{Introduction}
\label{sec:intro}

In the paper~\cite{guth2017volumes} Larry Guth proves that, on a closed Riemannian manifold, local volume estimates translate into global information about the Urysohn width. This resolved a conjecture of Gromov~\cite{gromov1983filling}, and provided an alternative way to prove the celebrated systolic inequality of Gromov. Guth also conjectured a generalization of his theorem, dealing with the Hausdorff content on compact metric spaces in place of volume, and his conjecture was established by Liokumovich, Lishak, Nabutovsky, and Rotman~\cite{liokumovich2019filling}. Shortly after that, a simple and clever proof was given by Panos Papasoglu~\cite{papasoglu2020uryson}, and the method employed there gives the simplest and cleanest proof~\cite{nabutovsky2019linear} of Gromov's systolic inequality, with the best dimensional constants known so far.

The notion of the Urysohn $d$-width, popularized by Gromov \cite{gromov1983filling, gromov1988width}, is a metric invariant measuring to what extent a metric space can be approximated by a $d$-dimensional simplicial complex. After several successful applications, as in the systolic inequality, it became an invariant of independent interest. The original definition (equivalent to the one we give below) by Pavel~Urysohn was given in 1920s (and published posthumously by Pavel~Alexandrov~\cite{alexandroff1926notes}) in terms of closed coverings of bounded multiplicity. We use a different but equivalent definition.

\begin{definition}
\label{def:urysohn}
The \emph{Urysohn $d$-width} of a compact metric space $X$ is
$$
\UW_d(X) = \inf\limits_{\pi: X \to Y} \sup\limits_{y \in Y} \diam(\pi^{-1}(y)),
$$
where the infimum is taken over all continuous maps $\pi$ from $X$ to any simplicial complex $Y$ of dimension at most $d$. (Recall that $\diam A = \sup\limits_{a,a' \in A} \dist_X(a,a')$.)
\end{definition}

The width obeys the following trivial properties.
\begin{itemize}
  \item It is monotone with respect to inclusion: for any closed subset $S \subset X$, $\UW_d(S) \le \UW_d(X)$ for all $d$. Here and everywhere the width of a closed subset $S \subset X$ is defined using the extrinsic metric induced by $X$, for measuring diameters.
  \item It is monotone in dimension: $\UW_0(X) \ge \UW_1(X) \ge \UW_{2}(X) \ge \ldots$.
  \item The $n$-width of a closed Riemannian $n$-manifold is zero.
  \item The $(n-1)$-width of a closed Riemannian $n$-manifold is greater than zero, as follows from the Lebesgue covering lemma~\ref{lem:lebesgue}.
\end{itemize} 

In the same paper~\cite{guth2017volumes}, Guth gives an example of a metric on $S^3$ with locally small but globally large $2$-width~\cite[Section~4]{guth2017volumes}. Further, he asks if there is a setting in which local Urysohn width bounds translate into global ones. 

\begin{question}[{\cite[Question~5.3]{guth2017volumes}}]
\label{ques:larry}
Suppose that $M^n$ is a Riemannian manifold such that each unit ball $B\subset M$ has $\UW_q(B) < \eps$. If $\eps$ is sufficiently small, does this inequality imply anything about $\UW_{q'}(M)$ for some $q' \ge q$?
\end{question}

We answer this question in the negative (see Theorem~\ref{thm:tubes} below), and investigate how additional topological complexity assumptions affect the answer.

Our first result is an estimate of $1$-width of a closed Riemannian manifold $M$, depending on its topological complexity as well as the supremal width of its unit balls.

\begin{theorem}
\label{thm:surface}
Let $M^n$ be a closed Riemannian manifold with the first $\mathbb{Z}/2$-Betti number $\beta = \rk H_1(M; \mathbb{Z}/2)$. If every unit ball has $1$-width less than $1/15$, then $\UW_1(M) < \beta+1$.
\end{theorem}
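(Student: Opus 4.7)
\medskip
\noindent\textbf{Proof plan.}
The plan is to assemble the local $1$-width maps into a single quotient map $\pi : M \to Y$ with $Y$ a $1$-complex, and then to use the bound $\rk H_1(M; \mathbb{Z}/2) = \beta$ to control the diameters of the fibers of $\pi$.

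For each $x \in M$ fix a $1$-width map $\phi_x : B(x,1) \to G_x$, where $G_x$ is a $1$-complex and $\diam \phi_x^{-1}(v) < 1/15$ for every $v \in G_x$. Declare two points $y, y' \in M$ \emph{locally linked} if $y, y' \in B(x,1)$ and $\phi_x(y) = \phi_x(y')$ for some $x$; let $\sim$ be the equivalence relation generated by local linking. Set $Y := M/{\sim}$ with $\pi$ the quotient map. Around any $\pi(x)$, a sufficiently small neighborhood inherits a graph structure from $G_x$, and a patching argument using a locally finite refinement of the cover by unit balls should endow $Y$ with the structure of a $1$-dimensional simplicial complex, so that $\pi$ becomes a candidate for the infimum defining $\UW_1(M)$.

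To bound $\diam \pi^{-1}(v)$, join any two points $y, y' \in \pi^{-1}(v)$ by a chain $y = y_0, y_1, \ldots, y_k = y'$ of locally linked consecutive pairs. Each step has length $< 1/15$, so $\dist(y, y') < k/15$ by the triangle inequality. It thus suffices to bound the minimal chain length $k$ between any two points in a common fiber by $15(\beta + 1)$. I would aim for the following contradiction: if some fiber required a chain of length at least $15(\beta + 1)$, then closing consecutive blocks of the chain by short geodesic shortcuts in $M$ would produce loops $\gamma_1, \ldots, \gamma_{\beta + 1}$ in $M$. These loops would have to span a subspace of $H_1(M; \mathbb{Z}/2)$ of rank $\beta + 1$, contradicting $\rk H_1(M; \mathbb{Z}/2) = \beta$.

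The principal obstacle is this last step: verifying that the loops $\gamma_j$ extracted from a minimal long chain are $\mathbb{Z}/2$-linearly independent in $H_1(M; \mathbb{Z}/2)$. The intuition is that if some nontrivial $\mathbb{Z}/2$-combination of them bounded a $2$-chain in $M$, that chain could be used to locally shorten the original fiber chain, contradicting minimality. Making this rigorous will likely require a Mayer--Vietoris-style comparison between $H_1(M; \mathbb{Z}/2)$ and the cycle space of a combinatorial nerve associated with the chain construction, together with the constant $1/15$ in the hypothesis being chosen just small enough that each closing shortcut is short enough to allow the homological cutting without unwanted cancellations.
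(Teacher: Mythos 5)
Your proposal diverges substantially from the paper's argument, and both of the two steps you yourself flag as obstacles are genuine gaps that I do not think your proposed routes will close.

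The first gap is the construction of $Y$. The equivalence relation generated by ``$\phi_x(y)=\phi_x(y')$ for some $x$'' is not under your control: adjacent local maps $\phi_x$, $\phi_{x'}$ need not be compatible on their overlap, and once you saturate to an equivalence relation, fibers of the quotient can blow up arbitrarily (in the worst case $M/{\sim}$ is a single point). There is also no reason whatsoever for the quotient to be a $1$-complex, let alone one to which $\pi$ is continuous. The paper sidesteps this entirely by choosing a single globally defined map: fix a basepoint $p\in M$ and map $M$ to the Reeb-type graph of $\dist(\cdot,p)$, i.e.\ the space of connected components of level sets. This is automatically a $1$-complex receiving a continuous map from $M$, and all the work goes into bounding the diameters of the components $S_r(p)$.

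The second gap is the claim that a long minimal chain yields $\beta+1$ classes that are independent in $H_1(M;\mathbb{Z}/2)$, or contrapositively that a bounding $2$-chain ``locally shortens'' the chain. This mechanism does not work: the $2$-chain witnessing a homological relation can be spread out over all of $M$ and need not intersect the region where your chain lives in any way that helps shorten it. The paper's mechanism for getting a contradiction is quite different. If a component of $S_r(p)$ has diameter $\ge \beta+1$, take $\beta+2$ well-spaced points $x_0,\ldots,x_{\beta+1}$ on a connecting curve, join each to $p$ by a minimal geodesic, and form $\beta+1$ loops; by rank reasons some nontrivial $\mathbb{Z}/2$-combination bounds a $2$-chain $D$. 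Then one looks at the piece $D' = D\cap B_1(x_{i_1})$ inside a single unit ball and shows that the map $\bigl(\dist(\cdot,p),\,\dist(\cdot,x_{i_1})\bigr)\colon D'\to\mathbb{R}^2$ has nonzero mod-$2$ degree onto a definite-size disk. Corollary~\ref{cor:secondtool} (the fiber-contraction argument) then gives a lower bound on $\UW_1(D')$, contradicting the hypothesis $\UW_1(B_1(x_{i_1}))<1/15$. The contradiction thus comes from a degree-plus-width argument on a piece of the bounding chain sitting inside one unit ball, not from any chain-shortening. Without some analogue of this degree step, the homological half of your plan has no engine; and even with one, you would still need to repair the quotient construction.
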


The dependence on $\beta$ does not seem optimal. The best example we know has $\UW_1(M) \sim \beta^{1/n}$ (see Figure~\ref{fig:tubes}). This example is constructed in our second theorem, which resolves Guth's question in the negative.

\begin{theorem}
\label{thm:tubes}
For any $\eps>0$, there exists a closed Riemannian manifold $M^n$ with all unit balls of $1$-width less than $\eps$, and such that $\UW_{n-1}(M) \ge 1$.
\end{theorem}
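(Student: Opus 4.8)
The plan is to build $M^n$ out of many long thin tubes glued along their boundary spheres, so that each tube, being essentially a one-dimensional object at the ambient scale, has locally small $1$-width, while the combinatorial/topological complexity of the gluing pattern forces a nonzero $(n-1)$-width globally. Concretely: fix a large parameter $L$ (the tube length) and a small parameter $\delta\ll\eps$ (the tube thickness). Take a graph $G$ — for instance a large $3$-dimensional cubical grid, or a $\theta$-graph with many parallel edges — and replace each vertex by a small round sphere $S^{n-1}$ of radius $\approx\delta$ (a "node"), and each edge by a cylinder $S^{n-2}_\delta\times[0,L]$, capping things off so the result is a closed smooth $n$-manifold. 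Near any point, a unit ball $B$ sees either a piece of a single tube (which deformation retracts onto an interval, hence $\UW_1(B)$ is tiny once $\delta\ll\eps$) or a neighborhood of a node together with the finitely many tube-ends attached to it; by making the node small and the number of incident edges bounded, this local piece retracts onto a small graph, so again $\UW_1(B)<\eps$. This is the step one has to be careful with: one must choose $\delta$, $L$, and the local geometry of the caps so that every unit ball is genuinely close to a $1$-complex in the Urysohn sense, uniformly. I would verify it by exhibiting an explicit map from $B$ to a graph with small fibers, using the product structure on the tubes and a fixed model for the node caps.

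The heart of the argument is the lower bound $\UW_{n-1}(M)\ge 1$, and the main obstacle is precisely this: showing that no map $\pi:M\to Y$ to an $(n-1)$-complex can have all fibers of diameter $<1$. The clean way to do this is via the coarea-type / Lebesgue-covering-lemma mechanism already invoked in the excerpt (Lemma~\ref{lem:lebesgue}): if $\UW_{n-1}(M)<1$ then $M$ admits a covering by sets of diameter $<1$ and multiplicity $\le n$ (the nerve of the cover maps to an $(n-1)$-complex), which via a degree / homological argument contradicts the fact that $M$ is a closed $n$-manifold of positive "size". The subtlety is that a single long thin tube already has arbitrarily small $(n-1)$-width — a tube maps to an interval with tiny fibers — so the lower bound must come from the way the tubes assemble into a closed manifold carrying a nontrivial fundamental class. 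The mechanism: pick one tube $T\cong S^{n-2}_\delta\times[0,L]$ and look at the relative cycle it carries; any cover of $M$ by sets of diameter $<1$ restricts to a cover of a unit-length sub-cylinder $S^{n-2}_\delta\times[t,t+1]\subset T$, and since this sub-cylinder is a genuine (relative) $n$-manifold-with-boundary whose two ends are at distance $1$ apart, a Lebesgue-type estimate forces the cover to have multiplicity $\ge n+1$ somewhere along it — exactly the statement that the unit sub-cylinder has $\UW_{n-1}\ge\text{(something)}\ge$ comparable to $1$ after we arrange the thickness and the metric so that the "width across the tube" is still $\ge 1$. Here one genuinely needs the tube to be thick enough in the transverse directions that $S^{n-2}_\delta$ contributes an $(n-1)$-dimensional obstruction; so the construction should use $\delta$ not too small relative to $1$ in the transverse metric while being small in the metric that controls unit balls — achieved by taking the tube metric to be $dr^2 + r^2 g_{S^{n-2}}$ scaled so the cross-section has diameter $\approx 1$ but unit balls still look one-dimensional because $L$ is huge and the cross-sections are far apart.

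Let me restructure to the cleanest version. I would actually take $M^n$ to be a connected sum / gluing along a graph as above but with \emph{each tube being a long thin $S^{n-1}\times S^0$-type neck of fixed transverse size $1$ and enormous length}, no — on reflection the right picture is: the tubes are $N$-fold covers or long necks so that a unit ball intersects at most one cross-section, while a single tube $[0,L]\times S^{n-1}$ (here $S^{n-1}$ of \emph{bounded} diameter, say $\le\eps$ in the relevant directions but the whole thing rescaled) retracts to an interval. Thus the concrete claim to prove in the lower bound is: \textbf{a thin cylinder $C=S^{n-1}_\rho\times[0,\ell]$ with $\ell\ge 1$, regarded inside $M$, has $\UW_{n-1}(C)\ge$ const} — false, so instead the lower bound must be homological and global, i.e. one shows directly that the fundamental class $[M]\in H_n(M;\mathbb Z/2)$ cannot be pushed into an $(n-1)$-complex if $M$ is covered by small sets with controlled multiplicity, using that at the scale $1$ the manifold $M$ is "$n$-dimensionally thick" at many points (the node regions), and those node regions each contribute local $H_n$, which a map to an $(n-1)$-complex with small fibers would have to kill — contradiction via the standard argument that $\UW_{n-1}$ of a closed $n$-manifold is bounded below by (a constant times) the infimal diameter of a ball on which the manifold is "macroscopically $n$-dimensional". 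I expect the node construction — arranging nodes that are individually macroscopically $n$-dimensional at scale $1$ while every unit ball is still $\eps$-close to a graph — to require the nodes themselves to be slightly spread out (e.g. a small $n$-cube of side $\approx 1$ but with a metric making its $1$-width small, as in the "topologically simple but locally low-dimensional" examples the abstract promises), and reconciling these two demands is the crux; the rest is bookkeeping with the Lebesgue lemma.
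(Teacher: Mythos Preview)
Your proposal has a genuine gap, and you put your finger on it yourself without resolving it. If $M$ is obtained by thickening a graph $G$ into tubes of transverse diameter $\delta\ll\eps$, then the obvious retraction $M\to G$ onto the core graph has all fibers of diameter $\lesssim\delta$; since $G$ is a $1$-complex (hence in particular an $(n-1)$-complex), this witnesses $\UW_{n-1}(M)\lesssim\delta$, not $\ge 1$. So the very feature that makes your local bound work kills the global bound. You then try to rescue the lower bound by fattening the cross-sections to transverse size $\approx 1$, but then a unit ball centered on a cross-section contains a round $S^{n-1}$ (or $S^{n-2}$) of diameter $\approx 1$ and cannot retract onto a graph with $\eps$-small fibers. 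Your closing paragraph essentially concedes that you do not know how to build nodes that are ``macroscopically $n$-dimensional at scale $1$'' while keeping every unit ball $\eps$-close to a graph; that is exactly the missing idea.

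The paper's construction resolves this tension in a way your proposal does not anticipate. The manifold $M^n$ is \emph{not} a thickened graph. Instead one starts with $X=T^n\times\mathbb{R}^{n-1}$, endows it with an $\eps$-fine colored triangulation whose vertex colors come in $n$ pairs, and lets $Z_1,\dots,Z_n$ be the $1$-skeleta spanned by each color pair. This gives a ``local join'' map $\tau:X\to\triangle^{n-1}$ with $Z_i=\tau^{-1}(v_i)$, and retractions $\pi_i:X\setminus Z_i^\vee\to Z_i$ that move points by $\lesssim\eps$. One then takes $M=\widetilde p^{\,-1}(0)$ for the perturbed projection $\widetilde p=p-\tau/2$ (with $p:X\to\mathbb{R}^{n-1}$ the second-factor projection). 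The point is that for each unit ball $B\subset M$ there exists \emph{some} index $i$ with $B\cap Z_i^\vee=\varnothing$, so $\pi_i|_B$ certifies $\UW_1(B)\lesssim\eps$; but the index $i$ varies with the ball, and there is \emph{no single global} $1$-complex to which $M$ retracts. The lower bound $\UW_{n-1}(M)\ge 1$ comes not from a Lebesgue-lemma argument on a sub-cylinder (which, as you observed, fails) but from the fiber-contraction tool applied to the $1$-Lipschitz projection $M\to T^n$: this map has degree $1$ (since $M$ and $T^n\times\{0\}$ are homotopic level sets), and $T^n$ has convexity radius $\ge 1$, so $\UW_{n-1}(M)<1$ would force the map to factor through an $(n-1)$-complex, killing $[T^n]$. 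The two essential ideas you are missing are therefore (i) several \emph{incompatible} local graph-retractions rather than one global one, and (ii) a degree-one map to a fixed target of large convexity radius to certify the global width.
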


\begin{figure}[ht]
  \centering
  \includegraphics[width=0.8\textwidth]{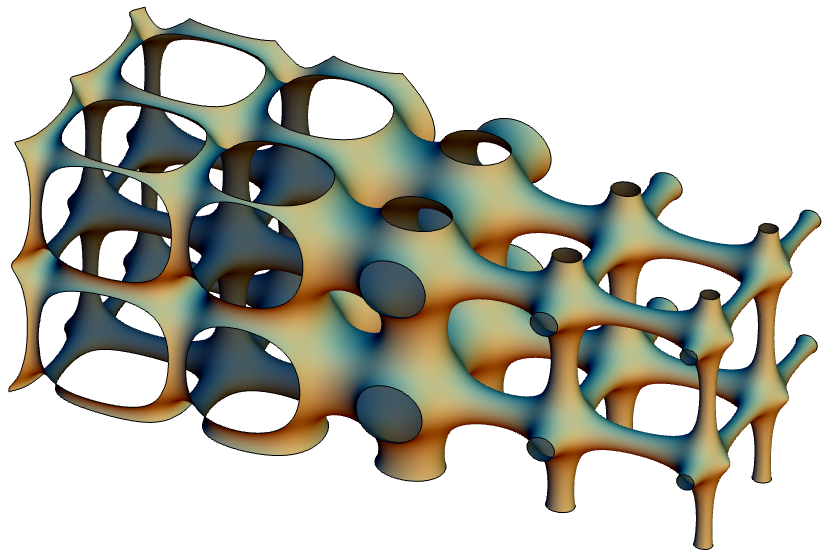}
  \caption{A piece of the surface from Theorem~\ref{thm:tubes} for $n=2$. The whole surface is made by replicating this piece periodically many times and closing up the ends. Roughly speaking, the left half of this surface has small Urysohn $1$-width, as well as the right half, while the whole surface has large Urysohn 1-width}
  \label{fig:tubes}
\end{figure}

Note that the negative result with $q=1$ and $q'=n-1$ is the strongest possible over all choices of $q, q'$. Therefore, the answer to Question~\ref{ques:larry} is negative for all $q, q'$. 

The example establishing this theorem has large Betti numbers. If one is looking for a topologically simple example, our third result gives it with $M^n$ being a ball (but with a worse dimension in the local width bound).

\begin{theorem}
\label{thm:ball}
For any $\eps>0$, there is a metric on the $n$-ball $M^n$ (or $n$-sphere, or $n$-torus) such that its $(n-1)$-width is at least $1$ but $\UW_{\lceil \log_2 (n+1) \rceil}(B) < \eps$ for every unit ball $B \subset M$.
\end{theorem}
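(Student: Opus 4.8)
The plan is to construct the metric explicitly and to establish the two requirements separately: a Lebesgue-type lower bound $\UW_{n-1}(M)\ge 1$, and an upper bound $\UW_k(B)<\eps$ on every unit ball $B$, where $k=\lceil\log_2(n+1)\rceil$. I would carry out the construction first on a cube $Q=[0,L]^n$ and then pass to the sphere and the torus by a gluing, respectively a quotient, arranging $g$ to be a standard product near $\partial Q$ so that these modifications cause no trouble. On $Q$ I would put a warped metric $g=\sum_{i=1}^{n}\varphi_i(x)^2\,dx_i^2$, where each weight $\varphi_i\colon Q\to[\delta,1]$ (with $\delta$ a small parameter depending on $\eps$) equals $1$ on a region $U_i$ where ``direction $i$ is active'' and equals $\delta$ off $U_i$. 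The heart of the construction is the choice of the active regions $U_1,\dots,U_n$, which I would organize recursively in $k=\lceil\log_2(n+1)\rceil$ nested stages — one per bit position of binary labels $\sigma(1),\dots,\sigma(n)\in\{0,1\}^k$ attached to the coordinate directions — so that two properties hold at once: \emph{local sparsity}, that every point of $Q$ lies in at most $k$ of the sets $U_i$; and \emph{global fullness}, that the ``phase pattern'' formed by the $U_i$ winds around $Q$ enough that, after rescaling so that $L$ is large, one still sees all $n$ coordinate directions at unit scale. The recursion depth, which is exactly $k$, is the source of the logarithm.

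For the upper bound I would fix a unit ball $B\subset M$ and use local sparsity: inside $B$ all but at most $k$ of the coordinate directions have been contracted to scale $\le\delta\le\eps$, and the at-most-$k$ directions still active in $B$ are recorded by $k$ globally defined ``bit functions'' $f_1,\dots,f_k$, the $b$-th of which tracks the coordinate occupying the $b$-th active slot and is made to take values in a graph (or a circle) so as to stay well-defined over all of $M$. The product map $(f_1,\dots,f_k)$ then sends $B$ into a complex of dimension $k$, and over $B$ its fibers lie inside joint level sets of the contracted coordinates, so they have diameter $O(\delta)<\eps$; hence $\UW_k(B)<\eps$.

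For the lower bound I would argue by contradiction. If $\UW_{n-1}(M)<1$, then by the standard reformulation of the Urysohn width there is an open cover of $M$ of multiplicity at most $n$ by sets of diameter $<1$. The quantitative form of global fullness should produce inside $M$ a large distorted cube that ``weaves through'' the pattern — being long in direction $i$ precisely where $U_i$ is active — whose $n$ pairs of opposite faces are more than distance $1$ apart in $M$; no set of the cover can then touch two opposite faces of this cube, and the Lebesgue covering lemma~\ref{lem:lebesgue} forces a point of multiplicity $\ge n+1$, a contradiction. Rescaling the whole construction turns the bound into $\UW_{n-1}(M)\ge 1$.

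The main obstacle is that local sparsity and global fullness pull in opposite directions: if the $k$ surviving directions in each unit ball could be chosen coherently across $M$, the local maps $(f_1,\dots,f_k)$ would patch into a global map to a $k$-complex — hence to an $(n-1)$-complex — with small fibers, contradicting the lower bound. So the recursive active-region pattern must be genuinely non-integrable, the field of surviving $k$-planes admitting no global framing, and — since $M$ is a ball (or a sphere, or a torus with trivial relevant homology) — this obstruction must be engineered purely metrically rather than being handed over by the topology. Designing the $k$-stage pattern so that local sparsity and global fullness hold simultaneously, and verifying that $g$ is an honest smooth metric whose unit balls have the asserted collapsed shape, is the technical core; the sphere and torus variants and the remaining estimates are routine.
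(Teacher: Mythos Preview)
Your proposal identifies the right tension (local sparsity versus global fullness) and a plausible architecture, but it is a plan rather than a proof: the recursive pattern of active regions $U_i$, the globally defined bit functions $f_b$, and the embedded ``distorted cube'' are all left to be designed, and you yourself flag this design as ``the technical core.'' There is also a specific slippage in the upper-bound argument. Your local sparsity is a \emph{pointwise} condition (each point lies in at most $k$ of the $U_i$), but the conclusion you draw is a \emph{unit-ball} condition (inside every unit $g$-ball $B$, at most $k$ directions are ever active). These are not the same: since $\varphi_i\ge\delta$ can be tiny, a unit $g$-ball may be enormous in the Euclidean sense along the compressed directions, so many different $k$-element active sets can occur within a single $B$, and projecting to the coordinates active at the center of $B$ need not give small fibers. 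Your bit functions are meant to absorb this, but ``$f_b$ tracks the coordinate occupying the $b$-th active slot and takes values in a graph'' is not yet a definition. On the lower-bound side, note that with the diagonal ansatz $g=\sum\varphi_i^2\,dx_i^2$, any path from $\{x_i=0\}$ to $\{x_i=L\}$ avoiding $U_i$ has $g$-length $\le\delta L$; so unless every $U_i$ separates the $i$-th pair of facets with thickness $\gtrsim 1$ --- for all $i$ simultaneously, while keeping at-most-$k$ overlaps --- there is no Lebesgue cube to appeal to, and you have not shown how to arrange this.

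For comparison, the paper does not use a diagonal warped product at all. It works in an ambient $X^{2^\ell-1}$ (a cube suffices, with $\ell=\lceil\log_2(n+1)\rceil$) carrying an $\eps$-fine colored triangulation, which yields $1$-complexes $Z_1,\dots,Z_m$ and a join map $\tau:X\to\triangle^{m-1}$; the new metric is the \emph{blow-up} $g_X+\tau^*g_\triangle$. The lower bound is then free: blowing up only enlarges distances, so the original Lebesgue bound on the cube survives. For the upper bound, any unit ball has $\tau$-image missing some facet $v_i^\vee$, so the concrete map $x\mapsto(\pi_i(x),\tau(x))\in Z_i\times\triangle^{m-1}$ has $\eps$-small fibers; the logarithm appears because $g_\triangle$ is itself chosen, recursively, so that unit balls in $\triangle^{m-1}$ already have small width. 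Finally $M^n$ is taken as an $n$-dimensional slice of $X$. Thus the paper replaces your undetermined bit functions by the explicit pair $(\pi_i,\tau)$, and replaces the search for a weaving cube by the observation that blow-ups preserve the Lebesgue bound.
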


\subsection*{Acknowledgements} We are grateful to Larry~Guth for numerous conversations and his remarks on this paper. We also thank Hannah~Alpert and Panos~Papasoglu for the stimulating discussions that led us to these questions.

\section{Bounding width from below}

Before we get to the main results, let us discuss the main tools one can use to show a space has substantial Urysohn width.

The \textbf{first tool} is the Lebesgue covering lemma (discovered by Lebesgue~\cite{lebesgue1911non} and first proved by Brouwer~\cite{brouwer1913naturlichen}), which can be used to show that the $(n-1)$-width of the unit Euclidean $n$-cube equals $1$.

\begin{lemma}
\label{lem:lebesgue}
Every continuous map $f: [0,1]^n \to Y^d$ from the unit $n$-cube to an $d$-dimensional simplicial complex, $d<n$, has a fiber $f^{-1}(y)$ meeting some two opposite facets of the cube.
\end{lemma}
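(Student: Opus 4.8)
The plan is to argue by contradiction, combining the two classical facts that underlie this lemma. Write $F_i^{+},F_i^{-}$ ($i=1,\dots,n$) for the facets $\{x_i=1\}$, $\{x_i=0\}$ of the cube, and suppose toward a contradiction that no fiber $f^{-1}(y)$ meets both $F_i^{+}$ and $F_i^{-}$ for any $i$. Since $f([0,1]^n)$ is compact it lies in a finite subcomplex of $Y$, so we may assume $Y$ is a compact metric space with covering dimension $\dim Y\le d$. The first step is to record that the sets $B_i^{\pm}:=f(F_i^{\pm})\subseteq Y$ are disjoint and closed for each $i$: they are compact as continuous images of compact sets, and if $y\in B_i^{+}\cap B_i^{-}$ then $f^{-1}(y)$ would meet both opposite facets.

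Next I would invoke the partition characterization of covering dimension (the Eilenberg--Otto theorem; see the standard references on dimension theory, e.g.\ Hurewicz--Wallman or Engelking): if $\dim Y\le d$, then for any $d+1$ pairs of disjoint closed subsets of $Y$ there exist closed separators with empty common intersection. This is where the hypothesis $d<n$ enters, since it gives us $n\ge d+1$ pairs to work with. Applying the theorem to $(B_1^{+},B_1^{-}),\dots,(B_{d+1}^{+},B_{d+1}^{-})$ yields closed $C_i'\subseteq Y$ with $Y\setminus C_i'=U_i'\sqcup V_i'$ a disjoint union of opens, $B_i^{+}\subseteq U_i'$, $B_i^{-}\subseteq V_i'$, and $\bigcap_{i=1}^{d+1}C_i'=\emptyset$. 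Pulling these back, $C_i:=f^{-1}(C_i')$ is closed, $[0,1]^n\setminus C_i=f^{-1}(U_i')\sqcup f^{-1}(V_i')$ with $F_i^{+}\subseteq f^{-1}(B_i^{+})\subseteq f^{-1}(U_i')$ and $F_i^{-}\subseteq f^{-1}(V_i')$, so $C_i$ separates $F_i^{+}$ from $F_i^{-}$ in the cube; moreover $\bigcap_{i=1}^{d+1}C_i=f^{-1}\bigl(\bigcap_{i=1}^{d+1}C_i'\bigr)=\emptyset$.

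The contradiction comes from the second fact: the $n$ pairs of opposite facets of $[0,1]^n$ form an \emph{essential} family, i.e.\ whenever $D_i$ is a closed set separating $F_i^{+}$ from $F_i^{-}$ for every $i=1,\dots,n$, one has $\bigcap_{i=1}^{n}D_i\ne\emptyset$. This is a restatement of the Poincar\'e--Miranda theorem, which in turn follows from Brouwer's fixed point theorem; it is precisely where the geometry of the cube, and not merely $\dim[0,1]^n=n$, is used. Taking $D_i=C_i$ for $i\le d+1$ and, say, $D_i=\{x:x_i=1/2\}$ for $d+2\le i\le n$, essentiality produces a point in $\bigcap_{i=1}^{n}D_i\subseteq\bigcap_{i=1}^{d+1}C_i=\emptyset$, which is absurd. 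Hence some fiber of $f$ meets two opposite facets.

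I expect the only genuine work to be the careful invocation of these two classical inputs. If a self-contained account is wanted in place of citations, essentiality of the facets of the cube can be proved directly via Sperner's lemma (equivalently Brouwer's theorem), and the implication of the partition theorem we need can be obtained from the usual induction on the number of pairs, shrinking a finite open cover of multiplicity $\le d+1$ and choosing each separator inside the corresponding shrinking; both are routine but would add length. One small bookkeeping point worth stating explicitly is the reduction to a finite subcomplex, which is what lets "dimension $\le d$" of the complex $Y$ be read as covering dimension $\le d$.
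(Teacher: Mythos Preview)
Your proof is correct. The paper does not actually prove Lemma~\ref{lem:lebesgue}; it merely states it as the classical Lebesgue covering lemma and attributes the discovery to Lebesgue and the first proof to Brouwer, with citations. So there is no ``paper's own proof'' to compare against.

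Your argument is a clean and standard route: the contradiction hypothesis makes the images of opposite facets disjoint in $Y$; the Eilenberg--Otto partition characterization of covering dimension (applied to the first $d+1$ pairs) yields separators in $Y$ with empty common intersection; pulling them back gives separators of the corresponding pairs of facets in the cube whose intersection is still empty; and this contradicts the essentiality of the family of opposite facets (Poincar\'e--Miranda). The bookkeeping reduction to a finite subcomplex so that ``$d$-dimensional simplicial complex'' can be read as ``covering dimension $\le d$'' is appropriate, and the trick of padding out to $n$ separators with the hyperplanes $\{x_i=1/2\}$ is fine. If anything, one could shorten the argument slightly by noting that essentiality already holds for any $d+1\le n$ of the facet pairs (project to a $(d+1)$-dimensional face), so the padding step is not strictly needed; but what you wrote is perfectly valid.
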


The \textbf{second tool} amounts to the ``fiber contraction'' argument, which goes back to Gromov~\cite[Proposition~(F$_1$)]{gromov1988width}. A detailed exposition can be found in~\cite[Section~5]{guth2005lipshitz}. We quote here a version of this argument due to Guth~\cite[Lemma~5.2]{guth2005lipshitz}.

\begin{lemma}
\label{lem:contractfibers}
Let $W$ be a Riemannian manifold of convexity radius at least $\rho$; that is, any two points in a ball of radius $<\rho$ are connected by a unique minimal geodesic within this ball. Let $\pi: X \to Y$ be a map from a metric space $X$ to a simplicial complex $Y$, such that all fibers of $\pi$ have diameter less than $\rho$. Then any $1$-Lipschitz map $f: X \to W$ is homotopic to a map factoring as $g \circ \pi$, for some $g: Y \to W$. Moreover, the homotopy moves each point of $W$ by less than $2\rho$.
\end{lemma}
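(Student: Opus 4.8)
\emph{Reduction.} The plan is first to reduce the statement to the following cleaner claim: \emph{there is a continuous map $g\colon Y\to W$ with $\dist_W\big(f(x),g(\pi(x))\big)<\rho$ for every $x\in X$}. Granting such a $g$, for each $x$ the points $f(x)$ and $g(\pi(x))$ lie in a common ball of radius $<\rho$, hence are joined by a unique minimal geodesic $\gamma_x\colon[0,1]\to W$ lying in that ball, and $\gamma_x$ depends continuously on its endpoints; so $H(x,t):=\gamma_x(t)$ is a continuous homotopy with $H_0=f$, $H_1=g\circ\pi$, and $\dist_W\big(H(x,t),f(x)\big)\le\dist_W\big(f(x),g(\pi(x))\big)<\rho<2\rho$ for all $x,t$. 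It remains to construct $g$.

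\emph{The admissible targets.} For $y\in Y$ put
\[
C(y)\ :=\ \{\,w\in W:\ \dist_W(w,f(x))<\rho\ \text{ for all }x\in\pi^{-1}(y)\,\},
\]
with the convention $C(y)=W$ when $\pi^{-1}(y)=\varnothing$; a valid $g$ is exactly a continuous selection $y\mapsto g(y)\in C(y)$. Each $C(y)$ is nonempty: if $x_0\in\pi^{-1}(y)$, then, since $f$ is $1$-Lipschitz and $\pi^{-1}(y)$ has diameter $<\rho$, one has $\dist_W(f(x_0),f(x))\le\dist_X(x_0,x)<\rho$ for all $x\in\pi^{-1}(y)$, so $f(x_0)\in C(y)$. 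Moreover each $C(y)$ is geodesically convex — this is where the convexity-radius hypothesis enters, guaranteeing geodesic convexity of balls of radius $<\rho$ and hence of $C(y)$, which sits inside such a ball — and in particular contractible.

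\emph{Constructing $g$.} When $X$ is tame — a compact metric space, or a simplicial complex, which covers the applications — the multifunction $y\mapsto\pi^{-1}(y)$ is upper-hemicontinuous with closed values, and $\mathrm{image}(\pi)$ is closed; from this one checks that $y\mapsto C(y)$ is lower-hemicontinuous with nonempty contractible (geodesically convex) values, so a Michael-type continuous selection theorem produces $g$. Equivalently, when $Y$ is a finite-dimensional simplicial complex one builds $g$ directly by induction over the skeleta of $Y$, extending a partial selection across each simplex $\sigma$ from $\partial\sigma$ to $\sigma$ by means of the contractibility of the fibres $C(y)$, $y\in\sigma$.

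\emph{Main obstacle.} The real work is producing $g$ continuously, and it rests on two points: (i) a hemicontinuity property of the fibration $y\mapsto\pi^{-1}(y)$, which is automatic when $X$ is compact but requires care (or an extra hypothesis) for a wild metric space $X$, since otherwise fibres could "appear from infinity" as $y$ varies; and (ii) the Riemannian input that balls of radius $<\rho$, hence the sets $C(y)$, are geodesically convex and carry minimal geodesics depending continuously on their endpoints — precisely the form in which the convexity-radius hypothesis is consumed, and what makes both the selection step and the final interpolation homotopy go through. The remaining checks — joint continuity of $H$, and the displacement estimate (which in fact comes out below $\rho$, hence comfortably below $2\rho$) — are routine.
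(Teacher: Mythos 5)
The paper does not prove this lemma; it quotes it as Lemma~5.2 of Guth's \emph{Lipschitz maps from surfaces}, so the comparison below is against the standard argument.

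Your outline captures the two load-bearing ideas correctly: (i) once you have a continuous $g\colon Y\to W$ with $\dist_W(f(x),g(\pi(x)))<\rho$ for all $x$, the geodesic interpolation $H(x,t)=\gamma_x(t)$ is the desired homotopy and the displacement bound follows immediately; (ii) the convexity-radius hypothesis is what makes both the interpolation and the selection of $g$ possible, via geodesic convexity of small balls. This is the same architecture as Guth's proof. The place where your write-up leaves a genuine gap is the selection step. You appeal either to ``a Michael-type continuous selection theorem'' or to ``induction over the skeleta of $Y$ \dots\ by means of the contractibility of the fibres $C(y)$.'' Neither of these works as stated. Classical Michael selection is for convex-valued maps into Banach spaces; the geodesically-convex-valued version needs an equi-$LC^n$ hypothesis on the family $\{C(y)\}$, which you have not verified and which is not automatic. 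And contractibility of each individual $C(y)$ does \emph{not} by itself let you push a partial selection across a simplex $\sigma$: the values on $\partial\sigma$ lie in the various $C(y)$, $y\in\partial\sigma$, and there is no a priori reason they should lie in a single contractible set compatible with all $C(y)$, $y\in\sigma$.

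The missing ingredient, which is the real content of the standard proof, is a preliminary \emph{subdivision of $Y$}: one refines $Y$ so that for every simplex (or vertex star) $\sigma$ the preimage $\pi^{-1}(\sigma)$ has diameter strictly less than $\rho$. (For compact $X$ this is possible: the strict fiber-diameter bound propagates to a neighborhood, and a Lebesgue-number argument finishes it; this is exactly where your compactness caveat bites, and it is a caveat Guth's statement shares.) After subdividing, set $g(v)=f(x_v)$ for some $x_v\in\pi^{-1}(\overline{\mathrm{star}(v)})$ whenever that set is nonempty; then for every simplex $\sigma\ni v$ and every $y\in\sigma$ one has $g(v)\in C(y)$, so the values of $g$ on $\partial\sigma$ all lie in the single geodesically convex set $D_\sigma=\bigcap_{y\in\sigma}C(y)$, and geodesic coning inside $D_\sigma$ extends $g$ across $\sigma$. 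With the subdivision in place your sharper bound $\dist_W(f(x),g(\pi(x)))<\rho$ (hence displacement $<\rho$, better than the stated $2\rho$) does come out, but without naming the subdivision step the skeleton induction does not close. I would also soften ``$C(y)$ sits inside a ball of radius $<\rho$'': a priori $C(y)\subset B_\rho(f(x_0))$ for $x_0\in\pi^{-1}(y)$, a ball of radius exactly $\rho$; one should note that any two points of such an open ball already lie in a concentric ball of strictly smaller radius, so geodesic convexity still holds.
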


\begin{corollary}
\label{cor:secondtool}
Let $f: X \to W$ be a $1$-Lipschitz map from a metric space $X$ to a Riemannian manifold $W$ of convexity radius at least $\rho$. Suppose one of the following conditions holds.
\begin{enumerate}
  \item The induced map $f_* : H_n(X) \to H_n(W)$ is non-trivial for some $n$.
  \item For some closed subsets $X_0 \subset X$, $W_0 \subset W$, $f$ sends $X_0$ to $W_0$, and the induced map $f_* : H_n(X, X_0) \to H_n(W, U_{2\rho}(W_0))$ is non-trivial for some $n$. (Here $U_{2\rho}(W_0)$ is the neighborhood of $W_0$ of radius $2\rho$.)
\end{enumerate}
Then $\UW_{n-1}(X) \ge \rho$.
\end{corollary}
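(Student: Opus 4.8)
My plan is to reduce both cases to a direct application of Lemma~\ref{lem:contractfibers}, arguing by contradiction. Suppose $\UW_{n-1}(X) < \rho$. Then by Definition~\ref{def:urysohn} there is a continuous map $\pi \colon X \to Y$ to a simplicial complex $Y$ of dimension at most $n-1$, all of whose fibers have diameter less than $\rho$. Since $W$ has convexity radius at least $\rho$, Lemma~\ref{lem:contractfibers} applies to $f$ and $\pi$: the $1$-Lipschitz map $f \colon X \to W$ is homotopic to a map of the form $g \circ \pi$ for some $g \colon Y \to W$, and the homotopy moves each point by less than $2\rho$.

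For case (1), the homotopy $f \simeq g \circ \pi$ gives $f_* = (g \circ \pi)_* = g_* \circ \pi_*$ on $H_n$. But $\pi_*$ factors through $H_n(Y) = 0$, since $\dim Y \le n-1$ and a simplicial complex has vanishing homology above its dimension; hence $f_* \colon H_n(X) \to H_n(W)$ is trivial, contradicting the hypothesis. For case (2), I need to keep track of the subsets. The homotopy from Lemma~\ref{lem:contractfibers} moves points by less than $2\rho$, so it carries $f(X_0) \subset W_0$ into the open neighborhood $U_{2\rho}(W_0)$; thus $f$, viewed as a map of pairs $(X, X_0) \to (W, U_{2\rho}(W_0))$, is homotopic as a map of pairs to $g \circ \pi$, where $\pi$ is now regarded as a map of pairs $(X, X_0) \to (Y, \pi(X_0))$ and $g$ maps $\pi(X_0)$ into $U_{2\rho}(W_0)$. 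Consequently $f_* \colon H_n(X, X_0) \to H_n(W, U_{2\rho}(W_0))$ factors through $H_n(Y, \pi(X_0))$. This relative group vanishes because $\dim Y \le n-1$: from the long exact sequence of the pair, $H_n(Y, \pi(X_0))$ sits between $H_n(Y) = 0$ and $H_{n-1}(\pi(X_0)) \to H_{n-1}(Y)$, and more directly, the pair $(Y, \pi(X_0))$ has no relative $n$-cells, so $H_n(Y, \pi(X_0)) = 0$. Again $f_*$ is trivial, a contradiction.

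The one point requiring a little care is the homotopy-of-pairs claim in case (2): one must check that the tracking estimate in Lemma~\ref{lem:contractfibers} genuinely produces a homotopy $H_t$ with $H_t(X_0) \subset U_{2\rho}(W_0)$ for all $t$, not merely at the endpoints. This follows because the estimate ``moves each point of $W$ by less than $2\rho$'' is uniform along the homotopy: for $x \in X_0$ we have $f(x) \in W_0$ and $\dist(H_t(x), f(x)) < 2\rho$ for every $t$, so $H_t(x) \in U_{2\rho}(W_0)$ throughout. I expect this bookkeeping to be the only mild obstacle; everything else is the standard ``a map through a low-dimensional complex kills top homology'' observation, applied to the absolute case and the relative case respectively.
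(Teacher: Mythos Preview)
Your argument is correct and is precisely the intended one; the paper in fact states the corollary without proof, leaving it as an immediate consequence of Lemma~\ref{lem:contractfibers}, and your write-up supplies exactly that deduction.

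One small technical remark on case~(2): the assertion that ``the pair $(Y,\pi(X_0))$ has no relative $n$-cells'' presupposes that $\pi(X_0)$ is a subcomplex of $Y$, which it need not be, and the long exact sequence alone does not force $H_n(Y,\pi(X_0))=0$ for an arbitrary subspace. The clean fix is to factor instead through the pair $(Y,Y_0)$, where $Y_0$ is a subcomplex of a subdivision of $Y$ with $\pi(X_0)\subset Y_0\subset g^{-1}(U_{2\rho}(W_0))$; such a $Y_0$ exists because $g^{-1}(U_{2\rho}(W_0))$ is open in $Y$ and $\pi(X_0)$ is compact. Then $(Y,Y_0)$ is a genuine simplicial pair of dimension at most $n-1$, so $H_n(Y,Y_0)=0$ for the reason you state, and the rest of your argument goes through unchanged.
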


\section{Surface width estimates}
\label{sec:lowdim}

\begin{proof}[Proof of Theorem~\ref{thm:surface}]
This proof follows closely the ideas from~\cite[Section~1]{guth2005lipshitz},~\cite[Appendix~1,~(E$_1$)-(E$_1''$)]{gromov1983filling}. The main theorem in~\cite[Section~1]{guth2005lipshitz} says, basically, that in the case $M \simeq S^2$, there is a universal way to measure the Urysohn 1-width: it is given by the map to the set of the connected components of distance spheres around any point. The largest diameter of such a component gives the value $\UW_1(S^2)$ within a factor of $7$. We adapt this idea to higher dimensions, taking into account the topological complexity as well.

Pick any point $p \in M^n$. Consider the distance spheres $S_r(p)$. We show that $\UW_0(S_r(p)) < \beta + 1$ for each $r$. For $r < 1/2$ this is clear, so fix $r \ge 1/2$ and suppose that $\UW_0(S_r(p)) \ge \beta + 1$, so there are points $x$ and $y$ distance $\beta+1$ apart in the same connected component of $S_r(p)$. Denote by $\gamma$ a curve connecting $x$ and $y$ inside $S_r(p)$ (we can assume it exists by perturbing slightly the distance function $\dist(\cdot, p)$). Denote $x_0 = x$, $x_{\beta+1} = y$, and pick points $x_k \in \gamma$, $1 \le k \le \beta$, so that $\dist(x, x_k) = k$. Notice that $\dist(x_i, x_j) \ge |i-j|$. Denote by $g_k$ a minimal geodesic from $p$ to $x_k$, for $0 \le k \le \beta+1$. Denote by $\ell_k$, $0 \le k \le \beta$, the loop formed by the curves $g_{k}$, $g_{k+1}$ and the part of $\gamma$ between $x_{k}$ and $x_{k+1}$. The loops $\ell_0, \ldots, \ell_{\beta}$ cannot be independent in $H_1(M; \mathbb{Z}/2)$; hence, there exist indices $0 \le i_1 < \ldots < i_r \le \beta$ such that $[\ell_{i_1}] + \ldots + [\ell_{i_r}] = 0$ in $H_1(M; \mathbb{Z}/2)$.

The concatenation of $\ell_{i_1}, \ldots, \ell_{i_r}$ bounds a $2$-chain $D$ in $M$, which we also view as a closed subset of $M$. Assuming that $1$ is a regular value of $\dist(\cdot, x_{i_1})$ on $D$ (otherwise perturb this function slightly), one can view the intersection $D' = D \cap B_1(x_{i_1})$ as a $2$-chain as well. Now consider the map $f: M \to \mathbb{R}^2$ given by $f(\cdot) = (\dist(\cdot, p), \dist(\cdot, x_{i_1}))$ (see Figure~\ref{fig:surface}). Note it is $\sqrt{2}$-Lipschitz. We will show that $f(D')$ covers a disk $O$ of radius $\frac{\sqrt{2}-1}{2}$ in $\mathbb{R}^2$; formally speaking, the map $f : (D', \partial D') \to (\mathbb{R}^2, \mathbb{R}^2 \setminus \interior O)$ is of degree $1 \pmod{2}$. Then Corollary~\ref{cor:secondtool} can be applied to $f$ (composed with a $1/\sqrt{2}$-homothety, to make it $1$-Lipschitz), implying $\UW_1(D') \ge \frac{\sqrt{2}-1}{4\sqrt{2}}$. On the other hand, $\UW_1(D') \le \UW_1(B_1(x_{i_1})) < 1/15$ since $D' \subset B_1(x_{i_1})$, which gives a contradiction.

\begin{figure}[ht]
  \centering
  \includegraphics[width=1.0\textwidth]{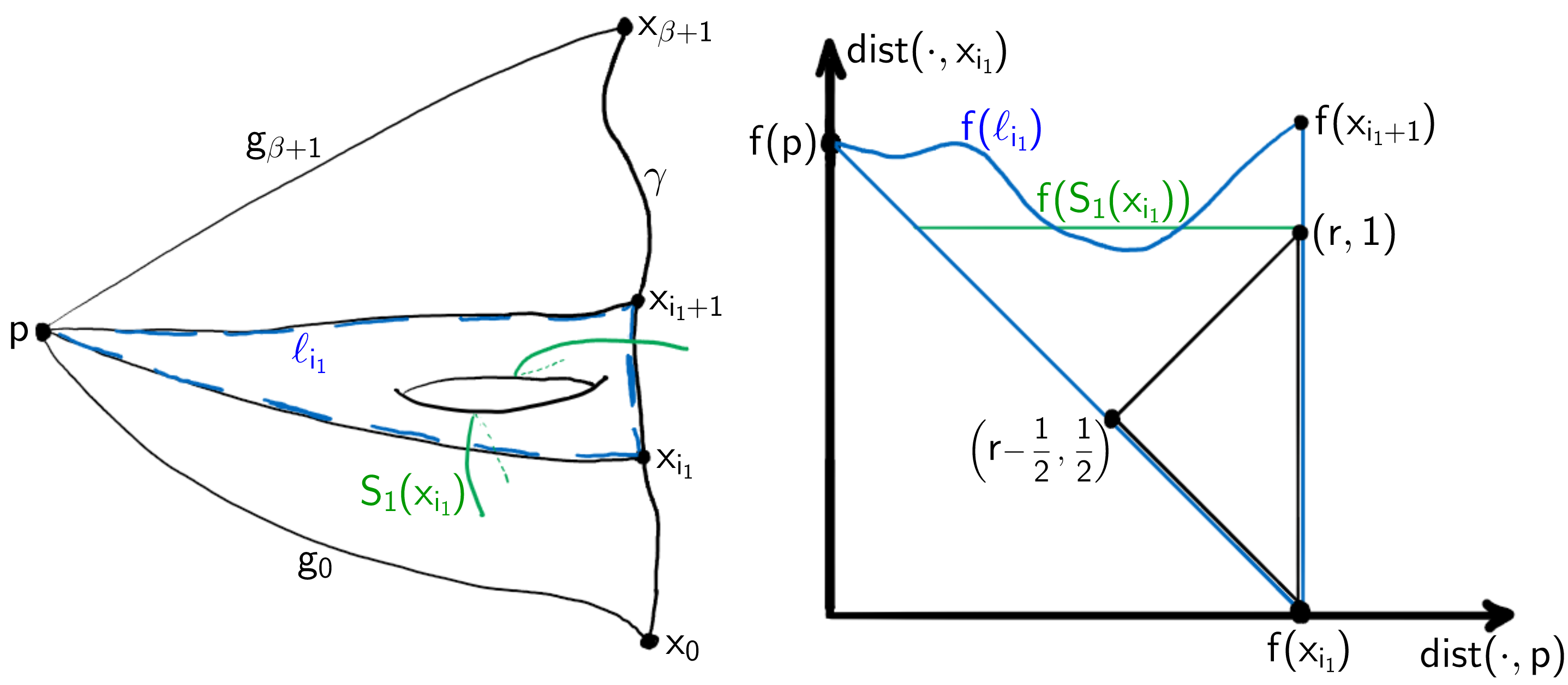}
  \caption{The map $f$ covers a substantial triangular region}
  \label{fig:surface}
\end{figure}

\textbf{Observation 1.} The image of $f$ lies above the straight line $\lambda$ connecting points $(r,0)$ and $(0,r)$.

\textbf{Observation 2.} Consider the triangle $\Delta$ with the vertices $(r,0)$, $(r,1)$, and $(r-1/2,1/2)$, and observe that $f(\partial D') \cap \interior \Delta = \varnothing$. Indeed, $\partial D' \subset \partial D \cup S_1(x_{i_1})$, so the image $f(\partial D')$ is contained in the union of the following curves:
\begin{itemize}
  \item the line $\lambda$, where $f(g_{i_1})$ lies;
  \item the vertical straight line through $(r,0)$, where $f(\gamma)$ lies;
  \item the horizontal line through $(r,1)$, where $f(S_1(x_{i_1}))$ lies;
  \item the curves $f(g_{k})$, $k > i_1$, each of which can be viewed as the graph of a $1$-Lipschitz function of argument $\dist(\cdot, p)$; they all lie above the straight line connecting $(r,1)$ and $(r-1/2,1/2)$.
\end{itemize}

\textbf{Observation 3.} Let $q = g_{i_1} \cap S_{1/2}(x_{i_1})$, and observe that the geodesic segment $[q,x_{i_1}] \subset g_{i_1}$ is present in the $1$-chain $\partial D'$. The image $f([q,x_{i_1}])$ is the straight line segment between $(r,0)$ and $(r-1/2,1/2)$ (traversed once). Other parts of $f(\partial D')$ are all contained in the union $f(\gamma) \cup f([p,q]) \cup f(S_1(x_{i_1})) \cup \bigcup\limits_{k> i_1} f(g_{k})$, avoiding this straight line segment. In view of the previous two observations, $f(\partial D')$ winds around $\Delta$ nontrivially. Therefore, the degree of $f : (D', \partial D') \to (\mathbb{R}^2, \mathbb{R}^2 \setminus \interior \Delta)$ is $1 \pmod{2}$.

\textbf{Observation 4.} The disk $O$ inscribed in $\Delta$ is of radius $\frac{\sqrt{2}-1}{2}$.

This concludes the proof.
\end{proof}

\begin{remark}
  Under the same assumptions (every unit ball in the surface $M$ has $1$-width less than $1/15$), one can show that the \emph{homological systole} (the length of the shortest loop that is not null-homologous) is less than $2$, regardless of genus. One way to show it is to adjust the proof of~\cite[Theorem~4.1]{guth2005lipshitz}.
\end{remark}

\section{Manifolds of small local but large global width}
\label{sec:highdim}

The constructions of this section are inspired by the mother of examples~\cite[Example~H$_1''$]{gromov1988width}.

\subsection{Local join representation}
A crucial ingredient for the constructions below is a decomposition of $\mathbb{R}^{2n-1}$ as the ``local join'' of several $1$-dimensional complexes.
\begin{lemma}
\label{lem:triangulation}
  Fix $0 < \eps < 1$. It is possible to triangulate $\mathbb{R}^{2n-1}$ by simplices with the following properties:
  \begin{enumerate}
    \item Each simplex is $c_n$-bi-Lipschitz to a regular simplex with edge length $\eps$.
    \item The vertices of triangulation can be colored by colors $1$ through $2n$ so that each simplex receives all distinct colors.
    \item The colored triangulation can be taken periodic with respect to $n$ almost orthogonal translation vectors of length $\approx 10 c_n$; hence the colored triangulation descends to the product $T^n \times \mathbb{R}^{n-1}$, where $T^n$ is a flat torus with convexity radius at least $1$.
  \end{enumerate}
\end{lemma}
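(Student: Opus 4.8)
The task: triangulate $\mathbb{R}^{2n-1}$ so that simplices are uniformly fat (bi-Lipschitz to regular simplices of edge $\eps$), the vertices admit a proper $2n$-coloring (every simplex gets all colors, i.e., the triangulation is "balanced" / the coloring is a simplicial map to $\partial\Delta^{2n-1}$), and the whole picture is periodic under $n$ nearly-orthogonal translations of length $\approx 10c_n$. The natural model to have in mind is the "Freudenthal / Kuhn" triangulation of $\mathbb{R}^m$ obtained by slicing the cubical lattice by the hyperplanes $x_i - x_j \in \mathbb{Z}$; its simplices are the order simplices of the cube, and it is the standard way to get a fat, lattice-periodic triangulation. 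But the Freudenthal triangulation uses only $m+1$ colors on a chromatic-number-$(m+1)$ complex, whereas here we want $2n$ colors on $\mathbb{R}^{2n-1}$, which is exactly $m+1$ colors for $m = 2n-1$. So in fact the Freudenthal triangulation of $\mathbb{R}^{2n-1}$ already has the right number of colors.

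First I would recall the Coxeter/Freudenthal triangulation $\mathcal{F}$ of $\mathbb{R}^{2n-1}$: take the integer lattice $\mathbb{Z}^{2n-1}$, and in each unit cube $v + [0,1]^{2n-1}$ insert the $(2n-1)!$ simplices $\{v \le x_{\sigma(1)} \le \cdots \le x_{\sigma(2n-1)} \le v+1\}$ indexed by permutations $\sigma$. These simplices glue up consistently across cube faces into a genuine triangulation of all of $\mathbb{R}^{2n-1}$, all of whose simplices are congruent (hence uniformly fat, with a constant depending only on $n$); after scaling by $\eps$ we get property (1) with a dimensional constant $c_n$. For the coloring, assign to a vertex $w \in \mathbb{Z}^{2n-1}$ the color $\left(\sum_{i} w_i\right) \bmod 2n \in \{0,1,\dots,2n-1\}$. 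Inside a single cube the vertices that appear in an order simplex form a chain $v = w^0 \subset w^1 \subset \cdots \subset w^{2n-1} = v+\mathbf{1}$ where each $w^{j+1}$ differs from $w^j$ by adding a single standard basis vector, so the coordinate-sums $\sum_i w^j_i$ take $2n$ consecutive integer values, hence all $2n$ residues mod $2n$ — giving property (2). (This is the standard fact that the Freudenthal triangulation is "balanced"/foldable.)

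For property (3): the triangulation $\mathcal{F}$ is obviously periodic under the full lattice $\mathbb{Z}^{2n-1}$, and the coloring is periodic under the sublattice $(2n\mathbb{Z})e_1 \oplus \mathbb{Z}e_2 \oplus \cdots \oplus \mathbb{Z}e_{2n-1}$ (more symmetrically, under $\{w : \sum w_i \equiv 0\}$, but a product sublattice is all we need). Now scale everything by the factor $\eps$ needed for (1), and then further rescale so that the desired translation vectors have length $\approx 10 c_n$: choose $n$ of the $2n-1$ coordinate directions, say $e_1,\dots,e_n$, and let the $n$ periods be suitable integer multiples of $\eps e_1, \dots, \eps e_n$ — multiples chosen both to respect the mod-$2n$ coloring (so a multiple of $2n$ in the $e_1$ direction, say) and to land within the prescribed length window, which is possible since we are free to rescale the ambient metric. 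These $n$ vectors are exactly orthogonal (they are axis-parallel), so "almost orthogonal" is automatic. Quotienting $\mathbb{R}^{2n-1} = \mathbb{R}^n \times \mathbb{R}^{n-1}$ by the lattice spanned by these $n$ vectors collapses the first factor to a flat torus $T^n$; since its shortest period has length $\approx 10 c_n \gg 1$, the injectivity (hence convexity) radius of $T^n$ is at least $1$, and the colored triangulation descends to $T^n \times \mathbb{R}^{n-1}$.

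The only genuinely fiddly point — and the one I expect to need the most care — is the bookkeeping in property (3): simultaneously making the translation vectors compatible with the $2n$-periodicity of the coloring, keeping their lengths in the window $\approx 10 c_n$, and confirming that the resulting torus has convexity radius $\ge 1$. All three are easily arranged because we have two independent scalings at our disposal (the edge-length $\eps$ and an overall homothety of the ambient $\mathbb{R}^{2n-1}$), but writing it so the constants chase through cleanly is the part that needs attention; everything else (fatness, congruence, and the coloring being proper) is a direct and well-known feature of the Freudenthal triangulation.
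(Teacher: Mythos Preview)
Your approach is essentially the paper's own: the authors offer precisely the Freudenthal/affine-$A$ alcove triangulation you describe as one option, and as a fallback the same cubical decomposition followed by a barycentric subdivision with the natural coloring by face-dimension. Your direct coordinate-sum coloring is the cleaner of the two and avoids the extra subdivision; it is exactly the balanced structure on the type-$A$ Coxeter complex.

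One concrete slip to fix in property (3): the sublattice $(2n\mathbb Z)e_1 \oplus \mathbb Z e_2 \oplus \cdots \oplus \mathbb Z e_{2n-1}$ you name does \emph{not} preserve the coloring --- translation by $e_2$ shifts the coordinate-sum, hence the color, by $1$. The color-preserving translations are exactly $\{w \in \mathbb Z^{2n-1} : \sum_i w_i \equiv 0 \pmod{2n}\}$, as you note parenthetically. So when you pick the $n$ axis-parallel periods, each of them (not just the first) must be a multiple of $2n$ along its axis, e.g.\ $2n\,e_1,\dots,2n\,e_n$. With that small correction your argument for periodicity, orthogonality, and the convexity-radius bound on the quotient torus goes through unchanged.
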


\begin{proof}
In fact, one can take all simplices congruent to one another. For instance, one can take the (scaled) set of alcoves for the affine Coxeter group $\widetilde A_{n}$ (see~\cite[Chapter~6]{shi1986kazhdan}); this will give an example with a good value of $c_n$ (perhaps, the best). If we are not chasing after good constants, much simpler constructions are possible. One way is to consider the cubical subdivision of $\mathbb{R}^{2n-1}$ with the set of vertices $\eps \mathbb{Z}^{2n-1}$, split each $\eps$-size cube into $(2n-1)!$ simplices, and then take the barycentric subdivision, which can be colored naturally. Either of these constructions can be made periodic easily.
\end{proof}

\begin{definition}
\label{def:join}
Let $X^{2n-1}$ be $\mathbb{R}^{2n-1}$ or $T^n \times \mathbb{R}^{n-1}$. Triangulate it as in Lemma~\ref{lem:triangulation}, and define $Z_i$, $1 \le i \le n$, to be the union of all edges of the triangulation between the vertices of colors $2i-1$ and $2i$. We say that $X$ is the \emph{$\eps$-local join} of $Z_1, \ldots, Z_n$.
\end{definition}

The motivation behind this definition is that every (top-dimensional) simplex $\sigma$ of the triangulation can be written as the join $(\sigma \cap Z_1) * \ldots * (\sigma \cap Z_n)$; that is, any point $x \in \sigma$ can be written as
\[
x = \sum\limits_{i=1}^n t_i z_i, \quad \text{where } z_i \in \sigma \cap Z_i, ~~ t_i \ge 0, ~~ \sum_{i=1}^n t_i = 1.
\]
The coefficients $t_i$ are determined uniquely; if $t_i \neq 0$, the corresponding $z_i$ is determined uniquely too. This defines a map $x \mapsto (t_1, \ldots, t_n)$ from $\sigma$ to the standard $(n-1)$-dimensional simplex $\triangle^{n-1}$; for adjacent simplices of the triangulation, those maps agree on their intersection; hence, we have a well-defined map
\[
\tau : X \to \triangle^{n-1},
\]
which we call the \emph{join map}. Note that $Z_i = \tau^{-1}(v_i)$, where $v_1, \ldots, v_n$ are the vertices of $\triangle^{n-1}$. For each vertex $v_i$, denote the opposite facet of $\triangle^{n-1}$ by $v_i^\vee$. For each complex $Z_i$, introduce its \emph{dual} complex $Z_i^\vee = \tau^{-1}(v_i^\vee)$. In other words, $Z_i^\vee$ is the union of all $(2n-3)$-dimensional cell of our triangulation that do not intersect $Z_i$. There are natural retractions
\[
\pi_i :  X \setminus Z_i^\vee \to Z_i,
\]
defined by sending $x = \sum\limits_{i=1}^n t_i z_i \in \sigma$ to $z_i \in \sigma \cap Z_i$; they are well-defined since $t_i \neq 0$ whenever $x \notin Z_i^\vee$. Note that $\pi_i$ moves each point by distance at most $\sup \diam \sigma \sim \eps$.

\subsection{Manifolds that are locally nearly one-dimensional}

\begin{proof}[Proof of Theorem~\ref{thm:tubes}]

Pick a torus $T^n$ with convexity radius $\ge 1$, as in Lemma~\ref{lem:triangulation}. On scale $\eps$, represent $X = T^n \times \mathbb{R}^{n-1}$ as the local join of one-dimensional complexes $Z_1, \ldots, Z_n$, as in Definition~\ref{def:join}. The goal is to build a manifold $M^n \subset X$ so that on the large scale ($\sim 1$) it resembles $T^n$ homologically, but on the small ($\sim \eps$) scale it will become porous in a way that makes its local $1$-width small.

Recall the join map $\tau : X \to \triangle^{n-1}$ arising from the local join structure of $X$. In this proof, it will be convenient to think of the target simplex as a regular simplex of inradius $3$, placed in $\mathbb{R}^{n-1}$ and centered at the origin. We make use of the join map $\tau : T^n \times \mathbb{R}^{n-1} \to \triangle^{n-1} \subset \mathbb{R}^{2n-1}$ to ``perturb'' the projection $p : T^n \times \mathbb{R}^{n-1} \to \mathbb{R}^{n-1}$ onto the second factor:


\[
\widetilde{p} := p - \tau/2.
\]
The choice of the factor $1/2$ is not particularly important as long as it is less than 1. We only use that the $p$-term dominates the $\tau$-term in the sense that $\widetilde{p}$ does not vanish outside of $T^n \times \interior \triangle^{n-1}$.

Finally, define the ``perturbation of $T^n = p^{-1}(0)$ by the $Z_i=\tau^{-1}(v_i)$'':
\[
M^n := \widetilde{p}^{-1}(0).
\]

Note: as defined, $M$ is a PL-manifold; but we can perturb $\tau$ slightly to make it smooth and to make $0$ a regular value of $\widetilde{p}$; then $M$ becomes a smooth manifold. Observe that $M$ is contained in $T^n \times \interior \triangle^{n-1}$, so $M$ is closed; it is also orientable by construction. See Figure~\ref{fig:tubes} for an illustration of the case $n=2$.

Now, within a unit ball $B_1(x) \subset M$, we want to find a projection on one of the $Z_i$ with $\eps$-small fibers. Recall the notation introduced after Definition~\ref{def:join}: the dual complexes $Z_i^\vee = \tau^{-1}(v_i^\vee)$, and the retractions $\pi_i : X \setminus Z_i^\vee \to Z_i$. One of these retractions would do if we find $i$, depending on $x$, so that $B_1(x) \cap Z_i^\vee = \varnothing$.
Pick $i$ maximizing the distance between $\tau(x)$ and $v_i^\vee$ in $\triangle^{n-1}$; this distance is at least $3$ by our choice of metric on $\triangle^{n-1}$.
When we move $x$ to $x' \in B_1(x)$, its $p$-projection changes by at most $1$, whereas its $\tau$-projection changes by at most $2$ (since the value of $\widetilde{p}$ is fixed), so $\tau(x')$ never reaches $v_i^\vee$. We can now use the retraction $\pi_i : B_1(x) \to Z_i$, showing $\UW_1(B_1(x)) \lesssim \eps$. (Notation $\lesssim$ means inequality that holds up to a factor depending on dimension only.)

To show that $\UW_{n-1}(M) \ge 1$, we use our second tool for estimating widths. Apply Corollary~\ref{cor:secondtool} to the $1$-Lipschitz projection map $M \to T^n$ (the composition $M \hookrightarrow T^n \times \mathbb{R}^{n-1} \to T^n$), sending the fundamental class $[M]\in H_n(M)$ to $[T^n]\neq 0$. Indeed, (the Poincar\'e duals of) the classes of $M$ and $T^n$ are the same in $H^{n-1}(T^n\times \triangle^{n-1}, T^n\times \partial \triangle^{n-1})$ as zero level sets for homotopic mappings $\widetilde{p}$ and $p$, respectively; the homotopy $p-t\tau$, $t \in [0,1/2]$, does not vanish on $T^n\times \partial \triangle^{n-1}$ since the $p$-term dominates the $\tau$-term.
\end{proof}

\begin{remark}
\label{rem:perturbation}
In this argument, we used a torus as the ``base space to be perturbed''. In fact, this construction can be repeated for any reasonable base space, provided that it has sufficient convexity radius. One can easily adapt Lemma~\ref{lem:triangulation} for this case, and the rest of the proof goes unchanged. Morally, the outcome is that any manifold can be ``homologically perturbed'' to make its local (on the scale comparable with its convexity radius) $1$-width arbitrarily small.
\end{remark}

\begin{remark}
\label{rem:scaling}
The parameters of the construction can be adjusted in order to get a manifold $M^n$ of $\UW_{n-1}(M) \gtrsim \beta^{1/n}$, with all unit balls $B \subset M$ having $\UW_1(B) \lesssim 1$. Here $\beta$ is the first Betti number of $M$. The adjustment is to start from a torus of convexity radius $\sim \beta^{1/n}$, and pick the triangulation scale $\eps \sim 1$.
\end{remark}

\subsection{Topologically simple \texorpdfstring{$n$}{n}-manifolds that are locally nearly \texorpdfstring{$\log n$}{log n}-dimensional}

The next result is an ``amplification'' of Guth's example~\cite[Section~4]{guth2017volumes} of a $3$-sphere with large $2$-width but all unit balls $\UW_2$-small. We start by taking a reasonable base space $X^{2n-1}$ equipped with an $\eps$-fine triangulation, as in Lemma~\ref{lem:triangulation}. A particular choice of $X$ is not really important; the only assumptions we need are its substantial codimension $1$ width, and the existence of a colored triangulation (local join representation). For example, one can take $X$ to be a unit cube in the Euclidean space $\mathbb{R}^{2n-1}$; then $\UW_{2n-2}(X) \ge 1$ by the first tool. If one takes a unit Euclidean ball, its codimension $1$ width is known exactly (see~\cite[Remark~6.10]{akopyan2012borsuk}), but one can use the second tool to get a weaker bound $\ge 1/2$; then one can take $X$ to be the ball of radius $2$ in order to have $\UW_{2n-2}(X) \ge 1$. In both examples, a local join structure is given by Lemma~\ref{lem:triangulation}. It is easy to modify the argument in order to take $X$ a sphere, or a torus, etc.

The triangulation of $X$ comes equipped with one-dimensional complexes $Z_1, \ldots, Z_n$, and the join map $\tau: X \to \triangle^{n-1}$ mapping $Z_i$ to the $i\textsuperscript{th}$ vertex of the simplex. Now we blow up the metric in $X$ along the $\triangle^{n-1}$-direction and leave it unchanged along the fibers of $\tau$. Formally speaking, endow $\triangle^{n-1}$ with an auxiliary metric making $\triangle^{n-1}$ a regular simplex with inradius $2$ (this choice will be explained later), and add its pullback to the metric of $X$. The resulting metric on $X$ is piecewise Riemannian, and after a slight smoothening, we get a Riemannian manifold $X'$.

\begin{proposition}
\label{prop:ball1}
$\UW_{2n-2}(X') \ge 1$ but $\UW_{n}(B) \lesssim \eps$ for every unit ball $B \subset X'$.
\end{proposition}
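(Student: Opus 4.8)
The lower bound $\UW_{2n-2}(X') \ge 1$ is essentially immediate: the new metric on $X'$ dominates the old metric on $X$ (we added a nonnegative pullback term), so the identity map $X' \to X$ is $1$-Lipschitz, and $\UW_{2n-2}(X') \ge \UW_{2n-2}(X) \ge 1$ by the choice of $X$ and the first tool. (Here I use that widening the metric can only increase diameters of fibers.) So the substance is the upper bound $\UW_n(B) \lesssim \eps$ for every unit ball $B \subset X'$.

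For the upper bound, the plan is to construct, for each unit ball $B = B_1(x) \subset X'$, a map to an $n$-dimensional complex with $\eps$-small fibers. The natural candidate target is the join $Z_{i_1} * \cdots * Z_{i_k} $ glued appropriately — but more precisely, the right object to map to is the simplicial complex underlying the fiberwise structure, together with the $\triangle^{n-1}$-coordinate. The key point is a distance estimate: in the blown-up metric, moving a unit distance in $X'$ changes the $\tau$-coordinate (measured in the inradius-$2$ metric on $\triangle^{n-1}$) by at most $1$, since the pullback term bounds the $\tau$-displacement from below. Hence for any $x$, the ball $B_1(x)$ has $\tau$-image of diameter at most $1$ inside $\triangle^{n-1}$. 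I would then use the following combinatorial fact about the simplex: any subset of $\triangle^{n-1}$ of diameter $\le 1$ (in a metric where the inradius is $2$) is disjoint from at least one of the dual facets $v_i^\vee$ — actually one needs a sharper pigeonhole statement giving that $\tau(B)$ misses a whole sub-skeleton. The map to the $n$-complex is then assembled by: (i) using the retractions $\pi_i$ on the parts where $B$ avoids $Z_i^\vee$, and (ii) recording the $\tau$-coordinate (which lands in an at most $?$-dimensional sub-simplex of $\triangle^{n-1}$) on the rest. Combining these, $B$ maps to a complex of dimension $\le 1 \cdot (\text{number of colors surviving}) + (\text{dimension of the surviving }\tau\text{-face})$, and the arithmetic should come out to $\le n$ after optimizing; this is exactly where the inradius-$2$ choice is calibrated.

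More concretely, I would argue as follows. The set $\tau(B)$ has diameter $\le 1$, so it lies inside a ball of radius $\le 1/2$, which (since the inradius is $2$) cannot contain the barycenter and must stay within distance $\ge 3/2$ of some facet — more carefully, there is a vertex $v_j$ such that $\tau(B)$ avoids a neighborhood of $v_j^\vee$, and in fact $\tau(B)$ is contained in the union of the open stars of a bounded number of vertices. Partition the colors $\{1,\dots,n\}$ into the ``surviving'' ones (those $i$ with $Z_i^\vee \cap B = \varnothing$) and the rest; on the surviving ones we contract via $\pi_i$ (each contributing one dimension, fibers of size $\lesssim \eps$), while for the non-surviving colors we retain the corresponding $\tau$-coordinate. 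The budget works out because a diameter-$1$ subset of the inradius-$2$ simplex can have at most $\lceil \log_2(n+1)\rceil$-ish many ``non-surviving'' directions — matching the dimension $\lceil \log_2(n+1)\rceil$ that appears in Theorem~\ref{thm:ball} — wait, here the claimed bound is $\UW_n(B)\lesssim \eps$, which is weaker, so in fact we only need that at least one color survives, and the join map handles the rest: $B$ maps to $Z_{i} * (\text{the }\tau\text{-image restricted to the star of }v_i)$, which has dimension $\le 1 + (n-2) + \text{glue} \le n$. The fibers over this complex are either singletons in the $\tau$-direction or $\pi_i$-fibers of diameter $\lesssim \eps$, giving $\UW_n(B) \lesssim \eps$.

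The main obstacle I anticipate is making the dimension count honest: one must be careful that the map assembled from $\pi_i$ (on the locus avoiding $Z_i^\vee$) and the raw $\tau$-coordinate (elsewhere) genuinely glues into a continuous map to a genuine $n$-dimensional complex, rather than something of dimension $2n-1$. The cleanest route is probably to observe that $B$ retracts onto $\pi_i^{-1}(\text{something}) \cup (\text{a neighborhood of } Z_i^\vee)$, and that the second piece, being (a neighborhood in) a $(2n-3)$-complex, can be pushed down further by iterating the local-join structure of $Z_i^\vee$ itself — so that one is effectively running an induction on $n$, peeling off one color at a time. Getting the bookkeeping of this induction to close with the final dimension $n$ (and the fiber diameters staying $\lesssim \eps$ at every stage, which is fine since each $\pi_i$ moves points by $\lesssim \eps$ and there are only $n$ stages) is the delicate part; the metric blow-up with inradius $2$ is precisely what guarantees that at each stage of the induction the relevant $\tau$-image is small enough to avoid one more dual facet.
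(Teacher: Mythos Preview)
Your lower bound argument is fine and matches the paper's. For the upper bound you have correctly isolated the key geometric input --- that $\tau(B)$ misses some facet $v_i^\vee$ because the auxiliary metric on $\triangle^{n-1}$ has inradius $2$ --- but you then overshoot the target badly. You never write down a concrete map, and the candidates you float (a join $Z_i * (\text{something})$, a partition into ``surviving'' and ``non-surviving'' colors, an induction peeling off one color at a time) either give the wrong dimension or are machinery for a harder statement (essentially Proposition~\ref{prop:ball3}, not this one).

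The paper's proof is a one-liner once you know $\tau(B)\cap v_i^\vee=\varnothing$: use the single map
\[
B \to Z_i \times \triangle^{n-1}, \qquad x \mapsto (\pi_i(x),\tau(x)).
\]
This is globally defined and continuous on $B$ (no gluing needed, since $\pi_i$ is defined on all of $X\setminus Z_i^\vee \supset B$), the target has dimension $1+(n-1)=n$, and the fibers have diameter $\lesssim\eps$: fixing $\tau(x)$ pins down the barycentric coordinates, fixing $\pi_i(x)$ confines $x$ to the star of a single edge of the $\eps$-fine triangulation, and the added pullback term $\tau^*g_\triangle$ contributes nothing to the fiber diameter since $\tau$ is constant on each fiber. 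Your join suggestion $Z_i * (\text{star of }v_i)$ would land in dimension $1+(n-1)+1=n+1$, and the ``induction on colors'' you propose as the delicate part is simply unnecessary here --- it is exactly what the paper does later, in the iterated blow-up of Proposition~\ref{prop:ball3}, to push the local width down from $n$ to $\lceil\log_2(n+1)\rceil$.
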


\begin{proof}
By assumption, $\UW_{2n-2}(X) \ge 1$. The metric on $X'$ is even larger, so $\UW_{2n-2}(X') \ge 1$.

Now take a unit ball $B \subset X'$, and observe that $\UW_{n}(B) \le \UW_{n}(\tau^{-1}(\tau(B)))$. By construction of the blown-up metric of $X'$ (namely, by the choice of the auxiliary metric on $\triangle^{n-1}$), $\tau(B)$ misses at least one facet of $\triangle^{n-1}$, say, the $i\textsuperscript{th}$ one. Then the retraction $\pi_i$ (in the notation introduced after Definition~\ref{def:join}) gives a map $\tau^{-1}(\tau(B)) \to Z_i$, whose fibers are small in the original metric of $X$. The map
\begin{align*}
  B &\to Z_i \times \triangle^{n-1} \\
  x &\mapsto (\pi_i(x), \tau(x))
\end{align*}
gives a desired bound on $\UW_{n}(B)$.
\end{proof}

For $n=2$ this construction recovers Guth's example. Let us rephrase this construction once again, since we are going to apply it inductively.

\textit{The blow-up construction.} Start from a manifold $X$ with metric $g_X$, and a piecewise smooth join map $\tau : X \to \triangle$ (obtained from a fine colored triangulation of $X$). Suppose $\triangle$ is equipped with an auxiliary metric $g_{\triangle}$, in which no unit ball $B^\triangle \subset \triangle$ meets all facets of $\triangle$. Consider the piecewise Riemannian metric $g_X + \tau^* g_\triangle$, and perturb it slightly to get a smooth metric $g_X'$.
We say that $g_X'$ is obtained from $g_X$ by \emph{blowing it up along the $\triangle$-direction, or across the join map}, via the auxiliary metric $g_\triangle$.

Observe that the distances in the blown-up metric $g_X'$ do not decrease in comparison with the original metric $g_X$, hence the Urysohn width does not decrease either. By the same reason, if the space $X$ itself was a simplex with the property ``no unit ball meets all facets'', the same holds true after the blow-up.

In the blow-up in Guth's example, the auxiliary metric on $\triangle^{n-1}$ was Euclidean, making it a regular simplex of inradius $2$. There is a more clever way to pick this auxiliary metric to get better estimates. 
Let $n = 2m$. We will start from a metric making $\triangle^{n-1}$ a regular simplex with inradius 2, and blow it up with the goal to have every unit ball $B^\triangle$ in $\triangle^{n-1}$ small in the sense of some width. Repeat the construction above: pick $m$ skeleta, each of dimension $1$, in a fine colored triangulation inside $\triangle^{n-1}$, and blow up the metric of $\triangle^{n-1}$ across the join map in order to have maps $B^\triangle \to Y^{m}$ with small fibers, for every unit ball $B^\triangle \subset \triangle^{n-1}$ (this is the conclusion of Proposition~\ref{prop:ball1}). Now, using the modified metric on $\triangle^{n-1}$, we blow up the metric of $X$ along the $\triangle^{n-1}$-direction. Call the resulting metric space $X''$.

\begin{proposition}
\label{prop:ball2}
$\UW_{4m-2}(X'') \ge 1$ but $\UW_{m+1}(B) \lesssim \eps$ for every unit ball $B \subset X''$.
\end{proposition}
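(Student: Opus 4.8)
The plan is to run the blow-up analysis in two layers: the outer layer is the blow-up of $g_X$ across the join map $\tau\colon X\to\triangle^{n-1}=\triangle^{2m-1}$, and the inner layer is the blow-up (via Proposition~\ref{prop:ball1}) that produced the auxiliary metric $g'$ on $\triangle^{2m-1}$ from the regular inradius-$2$ simplex. The lower bound is immediate: $X''$ is obtained from $X$ by blow-ups, which never decrease distances, hence never decrease Urysohn width; therefore $\UW_{4m-2}(X'')\ge\UW_{4m-2}(X)\ge 1$, the last inequality being the standing assumption on $X$ (recall $\dim X=2n-1=4m-1$, so e.g.\ a unit cube has codimension-$1$ width $\ge 1$ by the first tool).

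For the upper bound, fix a unit ball $B\subset X''$ and write $g_{X''}=g_X+\tau^*g'$. First, $\tau\colon (X'',g_{X''})\to(\triangle^{2m-1},g')$ is $1$-Lipschitz, so $\tau(B)$ lies in some unit ball $B^\triangle\subset(\triangle^{2m-1},g')$ and $B\subseteq\tau^{-1}(B^\triangle)$, whence $\UW_{m+1}(B)\le\UW_{m+1}(\tau^{-1}(B^\triangle))$. Since $g'$ was obtained by blowing up a regular inradius-$2$ simplex, and the property ``no unit ball meets all facets'' is inherited under a blow-up, the ball $B^\triangle$ misses some facet $v_i^\vee$ of $\triangle^{2m-1}$. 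Consequently $\tau^{-1}(B^\triangle)\subset X\setminus Z_i^\vee$, so the retraction $\pi_i\colon\tau^{-1}(B^\triangle)\to Z_i$ of Definition~\ref{def:join} is defined, and --- exactly as in the proof of Proposition~\ref{prop:ball1} --- the map $x\mapsto(\pi_i(x),\tau(x))$ has fibers of $g_{X''}$-diameter $\lesssim\eps$.

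Now I apply Proposition~\ref{prop:ball1} with $\triangle^{2m-1}$ (equipped with its fine colored triangulation and the $m$ one-dimensional skeleta used to build $g'$) playing the role of ``$X^{2n-1}$'' and $m$ playing the role of ``$n$'': its conclusion is that the unit ball $B^\triangle\subset(\triangle^{2m-1},g')$ admits a map $\phi\colon B^\triangle\to Y^m$ with $\dim Y^m\le m$ and all fibers of $g'$-diameter $\lesssim\eps$. Combining with the retraction, consider
\[
\Psi\colon B\to Z_i\times Y^m,\qquad \Psi(x)=\bigl(\pi_i(x),\,\phi(\tau(x))\bigr),
\]
whose target has dimension $1+m=m+1$. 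It remains to show the fibers of $\Psi$ have $g_{X''}$-diameter $\lesssim\eps$, which yields $\UW_{m+1}(B)\lesssim\eps$. Given $x,x'$ with $\Psi(x)=\Psi(x')$, put $z=\pi_i(x)=\pi_i(x')$ and note $\tau(x),\tau(x')$ lie in a common $\phi$-fiber, so $\dist_{g'}(\tau(x),\tau(x'))\lesssim\eps$. Lifting a short $g'$-path from $\tau(x)$ to $\tau(x')$ inside $\tau(B^\triangle)$ horizontally into $\pi_i^{-1}(z)$ (which has $g_X$-diameter $\lesssim\eps$) connects $x$ to a point lying in the same $(\pi_i,\tau)$-fiber as $x'$ along a $g_{X''}$-path of length $\lesssim\eps$; since that $(\pi_i,\tau)$-fiber has $g_{X''}$-diameter $\lesssim\eps$ by the previous paragraph, we conclude $\dist_{g_{X''}}(x,x')\lesssim\eps$.

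I expect the main obstacle to be precisely this last step: one must verify that recording only the coarser data $\phi(\tau(x))$ together with $\pi_i(x)$ --- rather than the full pair $(\pi_i(x),\tau(x))$ used in Proposition~\ref{prop:ball1} --- still produces $\eps$-small fibers in the final metric $g_{X''}$, which is what the horizontal-lifting argument above is designed to accomplish. The remaining points are routine bookkeeping: that $\triangle^{2m-1}$ with a fine colored triangulation meets the hypotheses of Proposition~\ref{prop:ball1} (a simplex of substantial codimension-$1$ width carrying a local join representation), and that the ``no unit ball meets all facets'' property of the regular inradius-$2$ simplex passes to $g'$.
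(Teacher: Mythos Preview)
Your proposal is correct and follows exactly the paper's approach: the paper defines the very same map $x\mapsto(\pi_i(x),\phi(\tau(x)))\in Z_i\times Y^m$ and asserts its fibers are small, while you additionally supply the horizontal-lifting justification for why those fibers have $g_{X''}$-diameter $\lesssim\eps$, a detail the paper leaves implicit. The only minor slack in your sketch is that ``$\pi_i^{-1}(z)$ has $g_X$-diameter $\lesssim\eps$'' does not by itself bound the $g_X$-length of the lifted path; but choosing the linear lift within a single top simplex of the $\eps$-triangulation (which is $\sim\eps$-Lipschitz as a section of $\tau$) makes this step go through.
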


\begin{proof}
$\UW_{m+1}(B) \le \UW_{m+1}(\tau^{-1}(\tau(B)))$, where $\tau(B)$ lies in a unit ball $B^\triangle \subset \triangle^{n-1}$, missing, say, the $i\textsuperscript{th}$ facet of $\triangle^{n-1}$. Then there is a map $\tau^{-1}(\tau(B)) \to Z_i \times Y^{m}$ with small fibers, defined as follows: a point $x$ gets mapped to $(\pi_i(x), y) \in Z_i \times Y^{m}$, where $y$ is the image of $\tau(x)$ under the map $B^\triangle \to Y^{m}$.
\end{proof}

Iterating this procedure $\ell$ times, we arrive at the following conclusion.

\begin{proposition}
\label{prop:ball3}
For a unit Euclidean cube $X$ (or a regular simplex of inradius $2$, or a ball, or a sphere, or a torus) of dimension $2^\ell k - 1$, there is a way to blow up the metric in order to get a space $X^{(\ell)}$ such that $\UW_{2^\ell k - 2}(X^{(\ell)}) \ge 1$ but $\UW_{k+\ell-1}(B) \lesssim \eps$ for every unit ball $B \subset X^{(\ell)}$.
\end{proposition}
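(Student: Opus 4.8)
The plan is an induction on $\ell$. The case $\ell=1$ is exactly Proposition~\ref{prop:ball1} (take $n=k$ there, so that $2^{1}k-2=2k-2$ and $k+1-1=k$), and $\ell=0$ is vacuous. For the inductive step I will run the blow-up construction once, exactly as in the proof of Proposition~\ref{prop:ball2}, feeding into it a blown-up simplex supplied by the inductive hypothesis. Throughout I keep track of the statement for the cube \emph{and} for the regular simplex of inradius $2$; in the simplex case I additionally record that ``no unit ball meets all facets'', since this is precisely the hypothesis needed in order to use a blown-up simplex as an auxiliary metric in the blow-up construction (and, as observed right after the blow-up construction, this property survives blow-ups).

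For the lower bound $\UW_{2^\ell k-2}(X^{(\ell)})\ge 1$: a blow-up never decreases distances, hence never decreases any Urysohn width, so it suffices that the starting space $X$ of dimension $2^\ell k-1$ already has $\UW_{2^\ell k-2}(X)\ge 1$. For the cube this is Lemma~\ref{lem:lebesgue}; for the ball, sphere, torus, or regular simplex of inradius $2$ it follows from Corollary~\ref{cor:secondtool} applied to the identity map (or the obvious degree-one relative map), exactly as in the discussion preceding Proposition~\ref{prop:ball1}.

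For the local bound, write $2^\ell k-1=2n-1$ with $n=2^{\ell-1}k$, so that $X$ carries a join map $\tau\colon X\to\triangle^{n-1}$ onto a simplex of dimension $n-1=2^{\ell-1}k-1$. Since $\ell\ge 2$ forces $2^{\ell-1}k$ to be even, this dimension is odd and of the admissible form $2^{\ell'}k-1$ with $\ell'=\ell-1$, so the inductive hypothesis applies to $\triangle^{n-1}$: blow it up (starting from the inradius-$2$ Euclidean metric) to a metric $g^{(\ell-1)}_{\triangle}$ in which no unit ball meets all facets and in which every unit ball $B^\triangle$ admits a map to a complex $Y$ of dimension $k+(\ell-1)-1=k+\ell-2$ with $\eps$-small fibers. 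Use $g^{(\ell-1)}_{\triangle}$ as the auxiliary metric and blow up $X$ across $\tau$; call the result $X^{(\ell)}$. Given a unit ball $B\subset X^{(\ell)}$, its image $\tau(B)$ lies in some unit ball $B^\triangle\subset(\triangle^{n-1},g^{(\ell-1)}_{\triangle})$, which misses some facet $v_i^\vee$; the retraction $\pi_i$ then has $\eps$-small fibers on $\tau^{-1}(\tau(B))$, and $x\mapsto(\pi_i(x),\,\phi(\tau(x)))$, where $\phi\colon B^\triangle\to Y$ is the width map for $B^\triangle$, maps $B$ to the complex $Z_i\times Y$ of dimension $1+(k+\ell-2)=k+\ell-1$ with $\eps$-small fibers. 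This is the desired bound.

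The only thing to watch is the bookkeeping: one must verify that each recursion lands on a simplex whose dimension is again of the form $2^{\ell'}k-1$ and is odd (so that Lemma~\ref{lem:triangulation} furnishes a local join representation), which is automatic for $\ell\ge 2$; and one must carry the extra property ``no unit ball meets all facets'' through the induction, as it is this property --- not merely the width bound --- that licenses using the blown-up simplex as an auxiliary metric at the next stage. No new geometric idea is needed beyond Propositions~\ref{prop:ball1} and~\ref{prop:ball2}.
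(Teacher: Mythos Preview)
Your proposal is correct and follows essentially the same approach as the paper: induct on $\ell$, use the inductive hypothesis to furnish a blown-up auxiliary metric on $\triangle^{2^{\ell-1}k-1}$ with the ``no unit ball meets all facets'' property and small local $\UW_{k+\ell-2}$, then blow up $X$ across $\tau$ and argue as in Proposition~\ref{prop:ball2}. Your write-up is in fact more careful than the paper's in tracking the parity/dimension bookkeeping and in explicitly carrying the facet-avoidance property through the induction.
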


\begin{proof}
The original metric on $X$ satisfies $\UW_{2^\ell k -2}(X) \ge 1$, and an $\eps$-local join structure on $X$ gives the join map $\tau: X \to \triangle^{2^{\ell-1}k - 1}$. We blow up the metric of $X$ across $\tau$ using a carefully chosen auxiliary metric on $\triangle^{2^{\ell-1}k - 1}$. Inducting on $\ell$, we may assume that there is a metric on $\triangle^{2^{\ell-1}k - 1}$ such that no unit ball meets all its facets, and every unit ball is small in the sense of $\UW_{k+(\ell-1)-1}$. We use this metric to blow up the metric of $X$ and this way get $X^{(\ell)}$. Arguing as in the proof of Proposition~\ref{prop:ball2}, one makes sure that every unit ball of $X^{(\ell)}$ is small in the sense of $\UW_{k+\ell-1}$.
\end{proof}

\begin{proof}[Proof of Theorem~\ref{thm:ball}]
Let $\ell = \lceil \log_2 (n+1) \rceil$, and apply Proposition~\ref{prop:ball3} with $k=1$ to get a ball $X$ (or a sphere, or a torus) of dimension $2^\ell - 1$ with large global $(2^\ell - 2)$-width but small local $\ell$-width. Now we can build $M^n$ as a subspace of $X$.
\end{proof}

\section{Open problems}

\begin{question}
\label{ques:surface}
Let $M^2$ be a closed Riemannian surface with the first $\mathbb{Z}/2$-Betti number $\beta$, and with every unit ball having $1$-width less than $\eps$, for some fixed small absolute constant $\eps$. In the optimal bound $\UW_1(M) \lesssim f(\beta)$, what is the order of magnitude of the right hand side? It must be between $\beta^{1/2}$ (by Remark~\ref{rem:scaling}) and $\beta$ (by Theorem~\ref{thm:surface}).
\end{question}

\begin{question}
\label{ques:ball}
Let $M^n$ be a Riemannian $n$-sphere, $n\ge 4$, with every unit ball having $d$-width less than $\eps_n$, for some fixed small dimensional constant $\eps_n$. What is the smallest $d = d(n)$ such that the assumption on local width would imply $\UW_{n-1}(M) \lesssim 1$? Theorems~\ref{thm:surface}~and~\ref{thm:ball} imply $2 \le d(n) < \log_2 (n+1)$.
\end{question}

\bibliography{ref}

\begin{thebibliography}{10}

\bibitem{akopyan2012borsuk}
A.~Akopyan, R.~Karasev, and A.~Volovikov.
\newblock Borsuk-{U}lam type theorems for metric spaces.
\newblock {\em arXiv preprint arXiv:1209.1249}, 2012.

\bibitem{alexandroff1926notes}
P.~Alexandroff.
\newblock Notes suppl{\'e}mentaires au ``{M}{\'e}moire sur les
  multiplicit{\'e}s {C}antoriennes'', r{\'e}dig{\'e}es d'apr{\`e}s les papiers
  posthumes de {P}aul {U}rysohn.
\newblock {\em Fundamenta Mathematicae}, 1(8):352--359, 1926.

\bibitem{brouwer1913naturlichen}
L.~E.~J. Brouwer.
\newblock {\"U}ber den nat{\"u}rlichen {D}imensionsbegriff.
\newblock {\em Journal f{\"u}r die reine und angewandte {M}athematik},
  142:146--152, 1913.

\bibitem{gromov1983filling}
M.~Gromov.
\newblock Filling {R}iemannian manifolds.
\newblock {\em Journal of Differential Geometry}, 18(1):1--147, 1983.

\bibitem{gromov1988width}
M.~Gromov.
\newblock Width and related invariants of {R}iemannian manifolds.
\newblock {\em Ast{\'e}risque}, 163--164:93--109, 1988.

\bibitem{guth2005lipshitz}
L.~Guth.
\newblock Lipshitz maps from surfaces.
\newblock {\em Geometric \& Functional Analysis GAFA}, 15(5):1052--1090, 2005.

\bibitem{guth2017volumes}
L.~Guth.
\newblock Volumes of balls in {R}iemannian manifolds and {U}ryson width.
\newblock {\em Journal of Topology and Analysis}, 9(02):195--219, 2017.

\bibitem{lebesgue1911non}
H.~Lebesgue.
\newblock Sur la non-applicabilit{\'e} de deux domaines appartenant
  respectivement {\`a} des espaces {\`a} $n$ et $n+p$ dimensions.
\newblock {\em Mathematische Annalen}, 70(2):166--168, 1911.

\bibitem{liokumovich2019filling}
Y.~Liokumovich, B.~Lishak, A.~Nabutovsky, and R.~Rotman.
\newblock Filling metric spaces.
\newblock {\em arXiv preprint arXiv:1905.06522}, 2019.

\bibitem{nabutovsky2019linear}
A.~Nabutovsky.
\newblock Linear bounds for constants in {G}romov's systolic inequality and
  related results.
\newblock {\em arXiv preprint arXiv:1909.12225}, 2019.

\bibitem{papasoglu2020uryson}
P.~Papasoglu.
\newblock Uryson width and volume.
\newblock {\em Geometric and Functional Analysis}, 30(2):574--587, 2020.

\bibitem{shi1986kazhdan}
J.-Y. Shi.
\newblock {\em The Kazhdan--Lusztig cells in certain affine {W}eyl groups},
  volume 1179 of {\em Lecture Notes in Mathematics}.
\newblock Springer-Verlag Berlin Heidelberg, 1986.

\end{thebibliography}
\bibliographystyle{abbrv}
\end{document}